\documentclass[12pt]{article}

\usepackage[table]{xcolor}
\usepackage{amssymb,amsmath}
\usepackage[margin=0.75in]{geometry}
\usepackage{mathrsfs}
\usepackage[all]{xy}
\usepackage{tikz}
\usepackage{tkz-graph}
\usepackage{tkz-berge}
\usetikzlibrary{decorations.pathreplacing}
\usepackage{diagbox}
\usepackage{pdflscape}
\usepackage{hyperref}

\title{Linear chord diagrams with long chords}
\author{Everett Sullivan}
\date{Version 1.0 \today}

%
\newtheorem{theorem}{Theorem} 

\newtheorem{definition}[theorem]{Definition}
\newtheorem{example}{Example}

\newtheorem{lemma}[theorem]{Lemma}

\numberwithin{equation}{section}

\def\qed{\hfill {\hbox{${\vcenter{\vbox{             
   \hrule height 0.4pt\hbox{\vrule width 0.4pt height 6pt
   \kern5pt\vrule width 0.4pt}\hrule height 0.4pt}}}$}}}

\newenvironment{proof}[1][Proof]{\smallskip\noindent{\bf #1.}\quad}
{\qed\par\medskip}

\newcommand{\abs}[1]{\lvert {#1} \rvert}

\newcommand{\para}[1]{\left( {#1} \right)}

\newcommand{\sumover}[3]{\sum_{{#1}}^{{#2}}{{#3}}}

\newcommand{\Image}{\text{Im}}

\newcommand{\restricto}[2]{\left.{#1}\right|_{{#2}}}


\newcommand\drawArc[2]{
	\draw (#1 - 1,0) -- (#2/2 + #1/2 - 1,#2/2 - #1/2) -- (#2 - 1,0);
}

\newcommand\drawDiagram[2]{
	\draw (0,0) -- (#1 - 1,0);
	\foreach \x/\y in {#2} {\drawArc{\x}{\y}};
	\foreach \z in {1,...,#1} {\node [below] at (\z-1,0) {\z};}
}

\newcommand{\minimalClass}[2]{\mathcal{M}^{({#1})}_{{#2}}}

\begin{document}

\maketitle
	
	\begin{abstract}
		A linear chord diagram of size $n$ is a partition of the set $\{1,2,\cdots,2n\}$ into sets of size two, called chords.
		From a table showing the number of linear chord diagrams of degree $n$ such that every chord has length at least $k$,
		we observe that if we proceed far enough along the diagonals, they are given by a geometric sequence.
		We prove that this holds for all diagonals, and identify when the effect starts.
	\end{abstract}
	
	\section{Introduction}
	
		A linear chord diagram is a matching of $\{1,2,\cdots,2n\}$.
		Chord diagrams arise in many different contexts, from the study of RNA ~\cite{Reidys11} to knot theory ~\cite{Chmutov12}. In combinatorics, chord diagrams show up in the m\'{e}nage problem ~\cite{Lucas91}, partitions ~\cite{Hsieh73}, and interval orders ~\cite{Winkler90}. 
		This paper will address diagrams where there is a specified minimum length for each chord.
		From a table counting the number of such diagrams for $n$ and $k$, we observe that if we proceed far enough along the diagonals, they are given by a geometric sequence.
		We prove that this holds for all diagonals, and identify when the effect starts.

	\section{Statement of Result}

	A \textit{linear chord diagram} of \textit{size} $n$ is a partition of the set $\{1,2,\cdots,2n\}$ into parts of size 2.
	
	We can draw linear chord diagrams with arcs connecting the partition blocks.
	\begin{center}
		\begin{tikzpicture}[scale=.45]
			\drawDiagram{6}{1/3,2/6,4/5}
		\end{tikzpicture}
	\end{center}
	
	If $c = \{s_{c},e_{c}\}$ where $s_{c} < e_{c}$ is a block of a linear chord diagram.
	We say that $s_{c}$ is the \textit{start point} of $c$ and $e_{c}$ is the \textit{end point}.
	Then \textit{length} of $c$ is $e_{c} - s_{c}$.
	
	We say that a chord $c$ \textit{covers} $i$ if $s_{c} < i < e_{c}$.
	We say that a chord $c$ \textit{covers} a chord $d$ if it covers $s_{d}$ and $e_{d}$.
	
	\begin{definition}
		Let $D_{n}$ denote the set of all linear chord diagrams with $n$ chords.
		
		Let $\minimalClass{k}{}$ denote the class of all linear chord diagrams such that every chord has length at least k.
		
		Let $\minimalClass{k}{n}$ denote the set of all linear chord diagrams with $n$ such that every chord has length at least k.
	\end{definition}
	
	Table \ref{tab:data1} shows the sizes of $\minimalClass{k}{n}$ for various $n$ and $k$.
	If $k$ is fixed, $\minimalClass{n}{k}$ can be computed using on the order of $2^{k}n^{2}$ arithmetic operations.
	$a_{n} = \abs{\minimalClass{n}{2}}$ and $b_{n} = \abs{\minimalClass{n}{3}}$ can be computed using linear recurrences:
	\begin{align*}
		a_{n} & = (2n-1)a_{n-1} + a_{n-2} \\
		b_{n} & = (2n+2)b_{n-1} - (6n-10)b_{n-2} + (6n-16)b_{n-3} - (2n-8)b_{n-4} - b_{n-5}.
	\end{align*}
	The recurrence for $\abs{\minimalClass{n}{2}}$, can be found in ~\cite{Hazewinkel95}; the recurrence for $\abs{\minimalClass{n}{3}}$ is new.
	Conjecturally, there are linear recurrences for every $\minimalClass{n}{k}$ where $k$ is fixed: We will address these matters elsewhere.
	
	{
	\begin{table}
	\caption {Counting chord diagram with long chords} \label{tab:data1} 
	\centering
	\resizebox{\columnwidth}{!}{%
	\begin{tabular}{|c|c|c|c|c|c|c|c|c|c|c|c|}
		\hline
		$n$ & 1 & 2 & 3 & 4 & 5 & 6 & 7 & 8 & 9 & 10 & 11 \\ \hline
		$\abs{\minimalClass{n}{1}}$ & 1 & 3 & 15 & 105 & 945 & 10395 & 135135 & 2027025 & 34459425 & 654729075 & 13749310575 \\ \hline
		$\abs{\minimalClass{n}{2}}$ & 0 & 1 & \cellcolor{black!20!white} 5 & 36 & 329 & 3655 & 47844 & 721315 & 12310199 & 234615096 & 4939227215 \\ \hline
		$\abs{\minimalClass{n}{3}}$ & 0 & 0 & 1 & \cellcolor{black!20!white} 10 & 99 & 1146 & 15422 & 237135 & 4106680 & 79154927 & 1681383864 \\ \hline
		$\abs{\minimalClass{n}{4}}$ & 0 & 0 & 0 & 1 & \cellcolor{black!20!white} 20 & \cellcolor{black!20!white} 292 & 4317 & 69862 & 1251584 & 24728326 & 535333713  \\ \hline
		$\abs{\minimalClass{n}{5}}$ & 0 & 0 & 0 & 0 & 1 & \cellcolor{black!20!white} 40 & \cellcolor{black!20!white} 876 & 16924 & 332507 & 6944594 & 156127796 \\ \hline
		$\abs{\minimalClass{n}{6}}$ & 0 & 0 & 0 & 0 & 0 & 1 & \cellcolor{black!20!white} 80 & \cellcolor{black!20!white} 2628 & \cellcolor{black!20!white} 67404 & 1627252 & 39892549 \\ \hline
		$\abs{\minimalClass{n}{7}}$ & 0 & 0 & 0 & 0 & 0 & 0 & 1 & \cellcolor{black!20!white} 160 & \cellcolor{black!20!white} 7884 & \cellcolor{black!20!white} 269616 & 8075052  \\ \hline
		$\abs{\minimalClass{n}{8}}$ & 0 & 0 & 0 & 0 & 0 & 0 & 0 & 1 & \cellcolor{black!20!white} 320 & \cellcolor{black!20!white} 23652 & \cellcolor{black!20!white} 1078464  \\ \hline
		$\abs{\minimalClass{n}{9}}$ & 0 & 0 & 0 & 0 & 0 & 0 & 0 & 0 & 1 & \cellcolor{black!20!white} 640 & \cellcolor{black!20!white} 70956  \\ \hline
		$\abs{\minimalClass{n}{10}}$ & 0 & 0 & 0 & 0 & 0 & 0 & 0 & 0 & 0 & 1 & \cellcolor{black!20!white} 1280  \\ \hline
		$\abs{\minimalClass{n}{11}}$ & 0 & 0 & 0 & 0 & 0 & 0 & 0 & 0 & 0 & 0 & 1 \\ \hline
	\end{tabular}
	}
	
	\bigskip
	
	The first four rows can be found in the OEIS under the identification numbers A001147, A000806, A190823, and A190824, respectively.
	\end{table}
	}
	
	Here we address the diagonals of the table. The shaded squares highlight a pattern.
	The number in the square one below and one to the right, is exactly $(n-k+1)$ our current square.
	This pattern holds for all such squares,
	
	\begin{theorem}\label{thm:main}
	
		Let $n$ and $k$ be positive integers such that $n \geq 3(n - k)$ and $n \geq k$.
		Then $\abs{\minimalClass{k+1}{n+1}} = (n-k+1)\abs{\minimalClass{k}{n}}$.
	
	\end{theorem}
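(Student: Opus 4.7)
Set $d = n - k$, so the hypothesis $n \geq 3(n-k)$ is equivalent to $k \geq 2d$. My plan is to fix the profile of the diagram on the middle segment, encode diagrams with that profile as perfect matchings in a bipartite staircase graph, and observe that the number of such matchings factors as a ($k$-independent) constant times $(d+1)^{k-2d}$; summing over profiles will then give the desired ratio $d+1 = n-k+1$. The first step is to record forced structure: any chord of length at least $k$ forces positions $\{1,\ldots,k\}$ to be chord starts and $\{2n-k+1,\ldots,2n\}$ to be chord ends; call these zones $L$ and $R$. The middle zone $M = \{k+1,\ldots,k+2d\}$ has exactly $2d$ positions, and the hypothesis $k \geq 2d$ forbids any chord with both endpoints in $M$ (such a chord would have length at most $2d - 1 < k$). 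Hence each $M$-position is either the end of a chord from $L$ or the start of a chord into $R$, and this data is captured by a word $\mu \in \{E,S\}^{2d}$ with $d$ letters of each type.

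Next I would count diagrams with fixed profile $\mu$. Listing the $n$ starts $u_1 < \cdots < u_n$ and ends $v_1 < \cdots < v_n$ in position order, a diagram corresponds to a bijection $\pi$ with $v_{\pi(i)} - u_i \geq k$ for every $i$. Because $(u_i)$ and $(v_j)$ are both increasing, the $0$/$1$ matrix $A_{ij} = [v_j - u_i \geq k]$ is a staircase matrix: there are non-decreasing thresholds $t_1 \leq \cdots \leq t_n$ with $A_{ij}=1$ iff $j \geq t_i$. The number of such bijections is then the standard staircase product $\prod_{i=1}^n (i - t_i + 1)$, valid once Hall's condition $t_i \leq i$ is verified. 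A direct computation of $t_i$ splits the index range as $[1,n] = [1,2d] \sqcup [2d+1,k] \sqcup [k+1,k+d]$; crucially, on the middle subrange $[2d+1,k]$ one finds $t_i = i - d$, so each factor is exactly $d+1$, whereas the factors on the outer subranges depend only on $\mu$ and $d$. This yields
\[
  N(\mu, k, d) \;=\; F_1(\mu,d) \cdot (d+1)^{k - 2d} \cdot F_2(\mu,d),
\]
from which $N(\mu, k+1, d) = (d+1)\, N(\mu, k, d)$, and summing over $\mu$ proves the theorem.

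The main obstacle is establishing the clean identity $t_i = i - d$ on the middle subrange $[2d+1, k]$. This is the one place where the hypothesis $k \geq 2d$ enters essentially: for $i \geq 2d+1$ the required value $u_i + k$ exceeds every end-position available in $M$, so the threshold is forced by arithmetic in $R$ alone. Checking Hall's condition reduces, on the $S$-rows, to the inequality $s_j \leq j + d$, which is immediate because among the first $s_j$ letters of $\mu$ only $s_j - j \leq d$ can be $E$s.
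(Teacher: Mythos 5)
Your argument is correct, but it takes a genuinely different route from the paper. The paper proves the identity bijectively: it inserts a new chord spanning the middle block and slides its start point leftward past $j$ of the ``outer'' chords, producing $n-k+1$ injections $\alpha_{n,k,j}\colon\minimalClass{k}{n}\to\minimalClass{k+1}{n+1}$ whose images are shown (via an explicit inverse $\beta$ and Lemma \ref{lem:left}) to partition $\minimalClass{k+1}{n+1}$; most of the work goes into checking that lengths stay $\geq k+1$ under the insertion and swaps. You instead share only the structural starting point (your forced-structure observations are exactly Lemmas \ref{lem:noChord} and \ref{lem:sameNumber}: $L$ is all starts, $R$ is all ends, no chord lies inside $M$, and the middle profile $\mu$ is balanced), and then you \emph{count}: with $\mu$ fixed, the admissible matchings form a staircase bipartite graph whose permanent is $\prod_i(i-t_i+1)$, and the computation $t_i=i-d$ on $i\in[2d+1,k]$ isolates a factor $(d+1)^{k-2d}$ flanked by $k$-independent factors $F_1(\mu,d)$ and $F_2(\mu,d)$. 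This buys strictly more than the theorem --- an explicit product formula for $\abs{\minimalClass{k}{n}}$ along the whole diagonal, from which the ratio $n-k+1$ falls out by incrementing the exponent --- and it avoids the delicate chord-by-chord length bookkeeping of Lemmas \ref{lem:alpha} and \ref{lem:beta}; what it does not give is the explicit bijection between $\minimalClass{k}{n}\times\{0,\dots,n-k\}$ and $\minimalClass{k+1}{n+1}$ that the paper constructs. One small point to complete: you verify Hall's condition $t_i\leq i$ only on the $S$-rows $i\in[k+1,k+d]$; it also needs the (equally routine) check on rows $i\in[1,2d]$, where it holds because at most $i-1$ of the letters of $\mu$ preceding position $i$ can be $E$'s, so fewer than $i$ end-positions lie below $u_i+k$.
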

	
	\section{Outline of the proof}
	
	We consider each diagonal separately.
	We refer to the $i^{\text{th}}$ diagonal as all the entries such that $(n - k + 1) = i$.
	For any entry $\minimalClass{k}{n}$ the $i^{th}$ diagonal we create $(n - k + 1)$ functions $\alpha_{n,k,j}$ ($j \in \{0,\cdots,n-k\}$) which are injective into $\minimalClass{k+1}{n+1}$.
	
	We show that the images of these functions are disjoint and cover $\minimalClass{k+1}{n+1}$.
	And so there are $(n-k+1)$-times as many elements in $\minimalClass{k+1}{n+1}$ as there are in $\minimalClass{k}{n}$.
	
	To create the bijection $\alpha_{n,k,j}$ we consider the middle $2(n-k)$ indices.
	
	Here is an example from an element of $\minimalClass{4}{6}$
	
	\begin{center}
			\begin{tikzpicture}[scale=.45]
				\draw (0,0) -- (11,0);
				\draw(0,0) -- (2,2) -- (4,0);
				\draw(1,0) -- (5,4) -- (9,0);
				\draw(2,0) -- (5,3) -- (8,0);
				\draw(3,0) -- (5,2) -- (7,0);
				\draw(5,0) -- (7.5,2.5) -- (10,0);
				\draw(6,0) -- (8.5,2.5) -- (11,0);
				\node [below] at (0,0) {1};
				\node [below] at (1,0) {2};
				\node [below] at (2,0) {3};
				\node [below] at (3,0) {4};
				\node [below] at (4,0) {5};
				\node [below] at (5,0) {6};
				\node [below] at (6,0) {7};
				\node [below] at (7,0) {8};
				\node [below] at (8,0) {9};
				\node [below] at (9,0) {10};
				\node [below] at (10,0) {11};
				\node [below] at (11,0) {12};
				\draw [decorate,decoration={brace,amplitude=5pt,mirror},xshift=0pt,yshift=0pt]
(3.8,-1) -- (7.2,-1) node [black,midway,yshift=-.5cm]
{};
			\end{tikzpicture}
		\end{center}
		Any chords starting or ending in the middle indices are highlighted
		
		\begin{center}
			\begin{tikzpicture}[scale=.45]
				\draw (0,0) -- (11,0);
				\draw[very thick](0,0) -- (2,2) -- (4,0);
				\draw(1,0) -- (5,4) -- (9,0);
				\draw(2,0) -- (5,3) -- (8,0);
				\draw[very thick](3,0) -- (5,2) -- (7,0);
				\draw[very thick](5,0) -- (7.5,2.5) -- (10,0);
				\draw[very thick](6,0) -- (8.5,2.5) -- (11,0);
				\node [below] at (0,0) {1};
				\node [below] at (1,0) {2};
				\node [below] at (2,0) {3};
				\node [below] at (3,0) {4};
				\node [below] at (4,0) {5};
				\node [below] at (5,0) {6};
				\node [below] at (6,0) {7};
				\node [below] at (7,0) {8};
				\node [below] at (8,0) {9};
				\node [below] at (9,0) {10};
				\node [below] at (10,0) {11};
				\node [below] at (11,0) {12};
				\draw [decorate,decoration={brace,amplitude=5pt,mirror},xshift=0pt,yshift=0pt]
(3.8,-1) -- (7.2,-1) node [black,midway,yshift=-.5cm]
{};
			\end{tikzpicture}
		\end{center}
		
		A new chord is inserted covering only the indices in the middle
		
		\begin{center}
			\begin{tikzpicture}[scale=.45]
				\draw (0,0) -- (13,0);
				\draw[very thick](0,0) -- (2.5,2.5) -- (5,0);
				\draw(1,0) -- (6,5) -- (11,0);
				\draw(2,0) -- (6,4) -- (10,0);
				\draw[very thick](3,0) -- (5.5,2.5) -- (8,0);
				\draw[gray, dashed](4,0) -- (6.5,2.5) -- (9,0);
				\draw[very thick](6,0) -- (9,3) -- (12,0);
				\draw[very thick](7,0) -- (10,3) -- (13,0);
				\node [below] at (0,0) {1};
				\node [below] at (1,0) {2};
				\node [below] at (2,0) {3};
				\node [below] at (3,0) {4};
				\node [below] at (4,0) {5};
				\node [below] at (5,0) {6};
				\node [below] at (6,0) {7};
				\node [below] at (7,0) {8};
				\node [below] at (8,0) {9};
				\node [below] at (9,0) {10};
				\node [below] at (10,0) {11};
				\node [below] at (11,0) {12};
				\node [below] at (12,0) {13};
				\node [below] at (13,0) {14};
				\draw [decorate,decoration={brace,amplitude=5pt,mirror},xshift=0pt,yshift=0pt]
(3.8,-1) -- (9.2,-1) node [black,midway,yshift=-.5cm]
{};
			\end{tikzpicture}
		\end{center}
		
		The new chord then has its start point iteratively swapped with the starting points of the unbolded cords, starting with the one that started last and stopping when there are $j$ unswapped unbolded chords.
		
		\begin{center}
			\begin{tikzpicture}[scale=0.45]
				\begin{scope}[shift={(0,-0.5)}]
					\draw (0,0) -- (11,0);
					\draw[very thick](0,0) -- (2,2) -- (4,0);
					\draw(1,0) -- (5,4) -- (9,0);
					\draw(2,0) -- (5,3) -- (8,0);
					\draw[very thick](3,0) -- (5,2) -- (7,0);
					\draw[very thick](5,0) -- (7.5,2.5) -- (10,0);
					\draw[very thick](6,0) -- (8.5,2.5) -- (11,0);
					\node [below] at (0,0) {1};
					\node [below] at (1,0) {2};
					\node [below] at (2,0) {3};
					\node [below] at (3,0) {4};
					\node [below] at (4,0) {5};
					\node [below] at (5,0) {6};
					\node [below] at (6,0) {7};
					\node [below] at (7,0) {8};
					\node [below] at (8,0) {9};
					\node [below] at (9,0) {10};
					\node [below] at (10,0) {11};
					\node [below] at (11,0) {12};
					\node at (5.5,-2) {$D$};
					\draw[->] (11.5,1) -- (12.5,1);
				\end{scope}
				\begin{scope}[shift={(13,-0.5)}]
					\draw (0,0) -- (13,0);
					\draw[very thick](0,0) -- (2.5,2.5) -- (5,0);
					\draw(1,0) -- (6,5) -- (11,0);
					\draw(2,0) -- (6,4) -- (10,0);
					\draw[very thick](3,0) -- (5.5,2.5) -- (8,0);
					\draw[gray, dashed](4,0) -- (6.5,2.5) -- (9,0);
					\draw[very thick](6,0) -- (9,3) -- (12,0);
					\draw[very thick](7,0) -- (10,3) -- (13,0);
					\node [below] at (0,0) {1};
					\node [below] at (1,0) {2};
					\node [below] at (2,0) {3};
					\node [below] at (3,0) {4};
					\node [below] at (4,0) {5};
					\node [below] at (5,0) {6};
					\node [below] at (6,0) {7};
					\node [below] at (7,0) {8};
					\node [below] at (8,0) {9};
					\node [below] at (9,0) {10};
					\node [below] at (10,0) {11};
					\node [below] at (11,0) {12};
					\node [below] at (12,0) {13};
					\node [below] at (13,0) {14};
					\node at (6.5,-2) {$\alpha_{6,4,2}(D)$};
					\draw[->] (13.5,1) -- (14.5,1);
				\end{scope}
				\begin{scope}[shift={(0,-9.5)}]
					\draw (0,0) -- (13,0);
					\draw[very thick](0,0) -- (2.5,2.5) -- (5,0);
					\draw(1,0) -- (6,5) -- (11,0);
					\draw[gray, dashed](2,0) -- (5.5,3.5) -- (9,0);
					\draw[very thick](3,0) -- (5.5,2.5) -- (8,0);
					\draw(4,0) -- (7,3) -- (10,0);
					\draw[very thick](6,0) -- (9,3) -- (12,0);
					\draw[very thick](7,0) -- (10,3) -- (13,0);
					\node [below] at (0,0) {1};
					\node [below] at (1,0) {2};
					\node [below] at (2,0) {3};
					\node [below] at (3,0) {4};
					\node [below] at (4,0) {5};
					\node [below] at (5,0) {6};
					\node [below] at (6,0) {7};
					\node [below] at (7,0) {8};
					\node [below] at (8,0) {9};
					\node [below] at (9,0) {10};
					\node [below] at (10,0) {11};
					\node [below] at (11,0) {12};
					\node [below] at (12,0) {13};
					\node [below] at (13,0) {14};
					\node at (6.5,-2) {$\alpha_{6,4,1}(D)$};
					\draw[->] (13.5,1) -- (14.5,1);
				\end{scope}
				\begin{scope}[shift={(15,-9.5)}]
					\draw (0,0) -- (13,0);
					\draw[very thick](0,0) -- (2.5,2.5) -- (5,0);
					\draw[gray, dashed](1,0) -- (5,4) -- (9,0);
					\draw(2,0) -- (6.5,4.5) -- (11,0);
					\draw[very thick](3,0) -- (5.5,2.5) -- (8,0);
					\draw(4,0) -- (7,3) -- (10,0);
					\draw[very thick](6,0) -- (9,3) -- (12,0);
					\draw[very thick](7,0) -- (10,3) -- (13,0);
					\node [below] at (0,0) {1};
					\node [below] at (1,0) {2};
					\node [below] at (2,0) {3};
					\node [below] at (3,0) {4};
					\node [below] at (4,0) {5};
					\node [below] at (5,0) {6};
					\node [below] at (6,0) {7};
					\node [below] at (7,0) {8};
					\node [below] at (8,0) {9};
					\node [below] at (9,0) {10};
					\node [below] at (10,0) {11};
					\node [below] at (11,0) {12};
					\node [below] at (12,0) {13};
					\node [below] at (13,0) {14};
					\node at (6.5,-2) {$\alpha_{6,4,0}(D)$};
				\end{scope}
			\end{tikzpicture}
		\end{center}

	\section{Details of the proof}
	
	\begin{definition}
	
		Let $C$ be a linear chord diagram, then we define $L_{n,k} = \{1,2,\cdots,k\}$, $M_{n,k} = \{k+1,k+3,\cdots,2n-k\}$, and $R_{n,k} = \{2n-k+1,2n-k+1,\cdots,2n\}$.
		Let $C_{n,k}$ denote the set of all chords $c \in C$ such that $s_{c} \in M_{n,k}$ or $e_{c} \in M_{n,k}$, and $S_{C}$ denote the set of all chords $c \in C$ such that $c \notin C_{n,k}$. 
	
	\end{definition}

	\begin{lemma}\label{lem:noChord}
		Given any linear chord diagram in $\minimalClass{k}{n}$ such that $n \geq 3(n-k)$ and $n \geq k$, there is no chord $c$ such that $s_{c},e_{c} \in M_{n,k}$.
	\end{lemma}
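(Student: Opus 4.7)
The plan is to derive a direct contradiction from a pigeonhole-style length bound. Suppose for contradiction that some chord $c$ in a diagram $C \in \minimalClass{k}{n}$ has $s_c, e_c \in M_{n,k}$ with $s_c < e_c$. I would first observe that the middle region has $|M_{n,k}| = (2n-k) - (k+1) + 1 = 2(n-k)$ elements, so the largest possible distance between two elements of $M_{n,k}$ is $(2n-k) - (k+1) = 2(n-k) - 1$. Hence the length of $c$ satisfies $e_c - s_c \leq 2(n-k) - 1$.

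Next I would translate the hypothesis $n \geq 3(n-k)$ into a usable inequality. Rearranging gives $3k \geq 2n$, equivalently $k \geq 2(n-k)$. Combining with the previous bound, the length of $c$ is at most $2(n-k) - 1 \leq k - 1 < k$, which contradicts the assumption $C \in \minimalClass{k}{n}$ that every chord has length at least $k$. The hypothesis $n \geq k$ just ensures that $L_{n,k}$ and $R_{n,k}$ do not overlap so the decomposition $\{1,\ldots,2n\} = L_{n,k} \sqcup M_{n,k} \sqcup R_{n,k}$ is well defined; it is not otherwise needed in the argument.

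There is no real obstacle here: the entire statement reduces to the single inequality $2(n-k) - 1 < k$, which follows immediately from $3k \geq 2n$. The only mildly delicate point is making sure the size and the extremal gap of $M_{n,k}$ are both computed from the correct endpoints $k+1$ and $2n-k$, so I would be explicit about that calculation before invoking the hypothesis.
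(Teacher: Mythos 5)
Your proposal is correct and is essentially identical to the paper's own proof: both bound the maximal possible length of a chord contained in $M_{n,k}$ by $2(n-k)-1$ using the extreme endpoints $k+1$ and $2n-k$, then use $n \geq 3(n-k) \iff 3k \geq 2n$ to conclude this is at most $k-1$, contradicting the minimum length $k$. Your added remark about the role of $n \geq k$ is a harmless clarification not present in the paper.
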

	
	\begin{proof}
		If a chord has both its start point and end point inside $M_{n,k}$, then the largest length it could have, is when it starts at $k+1$ and ends at $2n-k$.
		So the maximum length any such chord could have is $2n-2k-1$.
		But $n \geq 3(n-k)$ which is equivalent to $3k \geq 2n$.
		Thus the maximum length any such chord could have is $2n - 2k -1 \leq 3k - 2k - 1 = k-1$.
		But every chord must have length at least $k$.
		Thus there is no chord such that its indices of the start point and end point lie inside $M_{n,k}$
	\end{proof}

	\begin{lemma}\label{lem:sameNumber}
		Given any linear chord diagram in $\minimalClass{k}{n}$ such that $n \geq 3(n-k)$ and $n \geq k$, $C_{n,k}$ contains exactly $n-k$ chords that start in $M_{n,k}$ and $n-k$ chords that end in $M_{n,k}$.
	\end{lemma}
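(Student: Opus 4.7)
My plan is to classify the $n$ chords of a given diagram $C \in \minimalClass{k}{n}$ by which of the three regions $L_{n,k}$, $M_{n,k}$, $R_{n,k}$ contains each of its two endpoints, and then read off the desired count from elementary vertex-sum identities.

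First I would show that no chord has both endpoints in $L_{n,k}$ or both in $R_{n,k}$. Indeed, any chord with both endpoints in $L_{n,k} = \{1,\ldots,k\}$ has length at most $k-1$, violating the minimum-length condition defining $\minimalClass{k}{}$, and the same bound applies to $R_{n,k}$. Combined with Lemma \ref{lem:noChord}, which rules out chords with both endpoints in $M_{n,k}$, every chord must be one of three \emph{cross} types: $L$--$M$, $L$--$R$, or $M$--$R$. Let $b$, $c$, $e$ denote the number of chords of each type, respectively.

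Next I would count endpoints region by region. Each of the $k$ indices of $L_{n,k}$ is an endpoint of exactly one $L$--$M$ or $L$--$R$ chord, giving $b + c = k$; symmetrically, $c + e = k$. Subtracting these equations yields $b = e$. The middle region contributes $\abs{M_{n,k}} = 2(n-k)$ endpoints, all of which belong to $L$--$M$ or $M$--$R$ chords, so $b + e = 2(n-k)$. Combined with $b = e$ this forces $b = e = n-k$. To finish I would observe that since $s_c < e_c$ and no chord stays inside $M_{n,k}$, a chord starting in $M_{n,k}$ must be of type $M$--$R$, while a chord ending in $M_{n,k}$ must be of type $L$--$M$; hence both counts equal $n-k$.

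I do not anticipate a serious obstacle here: once Lemma \ref{lem:noChord} is in hand and the short-chord bound eliminates intra-$L$ and intra-$R$ chords, the statement reduces to a two-equation linear count on vertex multiplicities. The only point requiring care is confirming that the classification of $M$-endpoints as either starts of $M$--$R$ chords or ends of $L$--$M$ chords is exhaustive, which is immediate from the start/end convention together with Lemma \ref{lem:noChord}.
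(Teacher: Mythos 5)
Your proposal is correct and follows essentially the same route as the paper: both arguments rule out chords confined to a single region (using Lemma \ref{lem:noChord} for $M_{n,k}$ and the length bound for $L_{n,k}$ and $R_{n,k}$), then count endpoints region by region to conclude that the number of $L$--$M$ chords equals the number of $M$--$R$ chords equals $n-k$. Your version merely packages the paper's counting (which subtracts $\abs{S_{C}}$ from $\abs{L_{n,k}} = \abs{R_{n,k}}$) as an explicit system of linear equations, and is if anything slightly more careful about why the classification of $M$-endpoints is exhaustive.
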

	
	\begin{proof}
	
		We first observe that no chord has its end index in $L_{n,k}$, since it if did, its maximum length would be $k-1$.
		Similarly, no chord has its start index in $R_{n,k}$ since it if did, its maximum length would be $2n-(2n-k+1) = k-1$.
		Thus every index in $L_{n,k}$ is a start index, and every index in $R_{n,k}$ is an end index.
		We also observe that $\abs{L_{n,k}} = \abs{R_{n,k}}$.
		
		Consider all chords in $S_{c}$.
		Since they neither start nor end in $M_{n,k}$, they must start in $L_{n,k}$ and end in $R_{n,k}$.
		
		Thus $\abs{L_{n,k}} - \abs{S_{C}}$ chords start in $L_{n,k}$ and end in $M_{n,k}$, and $\abs{R_{n,k}} - \abs{S_{C}}$ chords end in $R_{n,k}$ and start in $M_{n,k}$.
		
		By Lemma \ref{lem:noChord}, every Chord in $M$ either start in $L_{n,k}$ or ends in $R_{n,k}$.
		
		Thus $M_{n,k}$ has the same number of start indices as end indices, and that number is $n-k$.
	
	\end{proof}

	\begin{lemma}\label{lem:left}
		Given any linear chord diagram $C \in \minimalClass{k+1}{n+1}$ such that $n \geq 3(n-k)$ and $n \geq k$, let $a$ be the chord whose end index is $2n-k+2$ (i.e. the smallest element in $R_{n+1,k+1}$).
		Let $m$ be the number of chords $b \in S_{C}$ such that $s_{b} < s_{a}$.
		Then $m < n - k + 1$.
	\end{lemma}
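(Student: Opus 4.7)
The plan is to partition the $n$ chords of $C$ other than $a$ by their positional relationship to $a$, derive an exact formula for $m$ via an index-count on three intervals, and then apply the length condition to bound the relevant term. As a preliminary step I would show $s_a \in L_{n+1,k+1}$: the length condition on $a$ gives $s_a \leq 2n-2k+1$, and since no chord starts in $R_{n+1,k+1}$ the only alternative is $s_a \in M_{n+1,k+1}$; but then $k+2 \leq s_a \leq 2n-2k+1$ would force $2n \geq 3k+1$, contradicting the hypothesis $n \geq 3(n-k)$.

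Define $W$ (chords covering $a$), $X$ (chords covered by $a$), $Z$ (left-crossers with $s_c<s_a<e_c<e_a$), and $Y$ (right-crossers with $s_a<s_c<e_a<e_c$). Length considerations exclude chords with both endpoints in $[1,s_a-1]$ or in $[e_a+1,2n+2]$, so $W,X,Y,Z$ partition the chords other than $a$. Since $e_a$ is the minimum of $R_{n+1,k+1}$, any $b \in S_C$ with $s_b<s_a$ and $b \neq a$ must have $e_b>e_a$ and hence covers $a$; the converse is immediate, so $|W|=m$.

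Next I would count endpoints on three intervals. On $[1,s_a-1] \subseteq L_{n+1,k+1}$ all indices are starts of chords in $W \cup Z$, giving $|W|+|Z|=s_a-1$. On $R_{n+1,k+1}\setminus\{e_a\}$, all $k$ indices are ends of chords in $W \cup Y$, giving $|W|+|Y|=k$. On $(s_a,e_a)$, each of its $2n-k+1-s_a$ indices is a start of an $X$- or $Y$-chord or an end of an $X$- or $Z$-chord, giving $2|X|+|Y|+|Z|=2n-k+1-s_a$. Eliminating $|Y|$ and $|Z|$ yields the identity
\[
m = |X| + s_a - (n-k+1).
\]

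To conclude, observe that any $X$-chord has length at least $k+1$ with both endpoints in $(s_a,e_a)$, so its start lies in $[s_a+1,\, 2n-2k]$ and $|X| \leq \max\{0,\, 2n-2k-s_a\}$. Combined with $s_a \leq 2n-2k+1$, this gives $|X|+s_a \leq 2n-2k+1$ in either case, whence $m \leq n-k$. The main obstacle I anticipate is getting the central index count on $(s_a,e_a)$ exactly right (ensuring each interior index is counted once in the correct class); once the identity for $m$ is extracted, bounding $|X|$ via the length condition makes the inequality $m < n-k+1$ immediate.
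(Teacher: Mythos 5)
Your proof is correct, but it takes a genuinely different route from the paper's. The paper orders the $n-k$ chords ending in $M_{n+1,k+1}$ by end index, bounds each start index by $s_{c_i} \leq n-k+i$, inductively shows that fewer than $n-k+1$ chords of $S_C$ start to the left of each $s_{c_i}$, and then derives a contradiction from $m \geq n-k+1$ by forcing $s_a \geq 2n-2k+2$ and hence $\ell_a \leq k$. You instead first pin down $s_a \in L_{n+1,k+1}$, partition the remaining chords by their position relative to $a$ into coverers $W$, covered chords $X$, and the two crossing classes $Y,Z$, identify $W$ with the set being counted, and double-count indices on three intervals to extract the exact identity $m = \abs{X} + s_a - (n-k+1)$, after which the length condition on $X$-chords and on $a$ finishes the bound. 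All the steps check (I verified the identity against the worked examples in the paper, e.g.\ $\alpha_{6,4,2}(D)$ gives $\abs{X}=0$, $s_a=5$, $m=2$, and $\alpha_{6,4,0}(D)$ gives $\abs{X}=1$, $s_a=2$, $m=0$), and the facts you invoke without proof --- no chord ends in $L_{n+1,k+1}$, none starts in $R_{n+1,k+1}$ --- are established in the proof of Lemma \ref{lem:sameNumber}. What your approach buys is an exact formula for $m$ rather than just an upper bound (indeed it shows $m \leq n-k$ with equality analysis available), at the cost of a slightly longer case setup; the paper's argument is softer, needing only the pigeonhole-style observation about start indices, and reuses Lemma \ref{lem:sameNumber} more directly. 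Note also that the paper's $M^{\ast}$ is precisely your $X \cup Z$, so the two proofs are examining the same chords from different angles.
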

	
	\begin{proof}
	
		Let $M^{\ast}$ by the ordered set of all chords $c \in C_{n+1,k+1}$ in such that $e_{c} \in M$.
		We say $k < c$ for $k,c \in M^{\ast}$ if $e_{k} < e_{c}$.
		Observe that $M^{\ast}$ is completely ordered.
		By Lemma \ref{lem:sameNumber}, we have $\abs{M^{\ast}} = n-k$
		We may relabel the chords is $M^{\ast}$ to be $\{c_{1},c_{2},\cdots, c_{n-k}\}$.
		Observe that by Lemma \ref{lem:sameNumber}, $e_{c_{i}} \leq (k+1) + (n-k) + i = n+i+1$.
		Since $\ell_{c_{i}} = e_{c_{i}} - s_{c_{i}} \geq k+1$ we have $s_{c_{i}} \leq n + i + 1 - (k+1) = n - k + i$.
		Let $m_{i}$ be the number of chords $a \in S_{C}$ such that $s_{a} < s_{c_{i}}$.
		Then $m_{1} < n-k + 1$.
		The largest number of start indices to the left of $s_{c_{2}}$ is $n-k + 1$, but if it were that large, one of them must be the start of $c_{1}$.
		Thus $m_{2} < n-k + 1$.
		By induction we have $m_{i} < n-k + 1$ for all $i$.
		
		Now suppose $m \geq n-k + 1$, then $s_{c_{i}} < s_{a}$ for all $i$ since otherwise $m_{i} \geq n-k + 1$.
		Thus $s_{a} \geq (n-k+1) + (n-k) + 1 = 2n-2k + 2$
		Thus $\ell_{a}$ is bounded above by $2n-k+2 - (2n-2k + 2) = k < k+1$.
		
		Thus $m < n-k + 1$.
	
	\end{proof}

	\begin{definition}
		We define $\alpha_{n,k,i}$ for $i \in \{0,\cdots,n-k\}$, $n \geq k$, and $n \geq 3(n-k)$ to be a map from $\minimalClass{k}{n}$ to $D_{n}$ as follows.
		Given a diagram $C$, we insert a new chord $c$ with start point right before $M_{n,k}$ and end point right after $M_{n,k}$ to get diagram $C^{\ast}$.
		We then swap the start index of the new chord with the closest start index of a chord in $S_{C}$ to its left.
		We continue to swap until there are $i$ start indices of chords in $S_{C}$ to its left.
		
		Observe that since $n \geq 3(n-k)$, that the number of chords in $S$ is at least $n - (2n-2k) \geq 3(n-k) - 2(n-k) = n-k$
		Thus every $\alpha$ exists and is well defined.
	\end{definition}
	
	\begin{example}
	
		Obtaining $C^{\ast}$ from $C$ is shown below
	
		\begin{center}
			\begin{tikzpicture}[scale=.45]
				\begin{scope}[shift={(0,-0.5)}]
					\draw (0,0) -- (11,0);
					\draw[very thick](0,0) -- (2,2) -- (4,0);
					\draw(1,0) -- (5,4) -- (9,0);
					\draw(2,0) -- (5,3) -- (8,0);
					\draw[very thick](3,0) -- (5,2) -- (7,0);
					\draw[very thick](5,0) -- (7.5,2.5) -- (10,0);
					\draw[very thick](6,0) -- (8.5,2.5) -- (11,0);
					\node [below] at (0,0) {1};
					\node [below] at (1,0) {2};
					\node [below] at (2,0) {3};
					\node [below] at (3,0) {4};
					\node [below] at (4,0) {5};
					\node [below] at (5,0) {6};
					\node [below] at (6,0) {7};
					\node [below] at (7,0) {8};
					\node [below] at (8,0) {9};
					\node [below] at (9,0) {10};
					\node [below] at (10,0) {11};
					\node [below] at (11,0) {12};
					\node at (6,-2) {$C$};
					\draw[->] (11.5,1) -- (12.5,1);
				\end{scope}
				\begin{scope}[shift={(13,-0.5)}]
					\draw (0,0) -- (13,0);
					\draw[very thick](0,0) -- (2.5,2.5) -- (5,0);
					\draw(1,0) -- (6,5) -- (11,0);
					\draw(2,0) -- (6,4) -- (10,0);
					\draw[very thick](3,0) -- (5.5,2.5) -- (8,0);
					\draw[gray, dashed](4,0) -- (6.5,2.5) -- (9,0);
					\draw[very thick](6,0) -- (9,3) -- (12,0);
					\draw[very thick](7,0) -- (10,3) -- (13,0);
					\node [below] at (0,0) {1};
					\node [below] at (1,0) {2};
					\node [below] at (2,0) {3};
					\node [below] at (3,0) {4};
					\node [below] at (4,0) {5};
					\node [below] at (5,0) {6};
					\node [below] at (6,0) {7};
					\node [below] at (7,0) {8};
					\node [below] at (8,0) {9};
					\node [below] at (9,0) {10};
					\node [below] at (10,0) {11};
					\node [below] at (11,0) {12};
					\node [below] at (12,0) {13};
					\node [below] at (13,0) {14};
					\node at (7,-2) {$C^{\ast}$};
				\end{scope}
			\end{tikzpicture}
		\end{center}
		
		Here is $\alpha_{3,2,0}$ applied to an element of $\minimalClass{2}{3}$
		
		\begin{center}
			\begin{tikzpicture}[scale=.45]
				
				\begin{scope}[shift={(0,-.5)}]
					\draw (0,0) -- (5,0);
					\draw[very thick] (0,0) -- (1,1) -- (2,0);
					\draw (1,0) -- (2.5,1.5) -- (4,0);
					\draw[very thick] (3,0) -- (4,1) -- (5,0);
					\node [below] at (0,0) {1};
					\node [below] at (1,0) {2};
					\node [below] at (2,0) {3};
					\node [below] at (3,0) {4};
					\node [below] at (4,0) {5};
					\node [below] at (5,0) {6};
					\draw[->] (5.5,1) -- (7.5,1);
				\end{scope}
				
				\begin{scope}[shift={(8,-.5)}]
					\draw (0,0) -- (7,0);
					\draw[very thick] (0,0) -- (1.5,1.5) -- (3,0);
					\draw (1,0) -- (3.5,2.5) -- (6,0);
					\draw[gray, dashed] (2,0) -- (3.5,1.5) -- (5,0);
					\draw[very thick] (4,0) -- (5.5,1.5) -- (7,0);
					\node [below] at (0,0) {1};
					\node [below] at (1,0) {2};
					\node [below] at (2,0) {3};
					\node [below] at (3,0) {4};
					\node [below] at (4,0) {5};
					\node [below] at (5,0) {6};
					\node [below] at (6,0) {7};
					\node [below] at (7,0) {8};
					\draw[->] (7.5,1) -- (9.5,1);
				\end{scope}
				
				\begin{scope}[shift={(18,-.5)}]
					\draw (0,0) -- (7,0);
					\draw[very thick] (0,0) -- (1.5,1.5) -- (3,0);
					\draw (2,0) -- (4,2) -- (6,0);
					\draw[gray, dashed] (1,0) -- (3,2) -- (5,0);
					\draw[very thick] (4,0) -- (5.5,1.5) -- (7,0);
					\node [below] at (0,0) {1};
					\node [below] at (1,0) {2};
					\node [below] at (2,0) {3};
					\node [below] at (3,0) {4};
					\node [below] at (4,0) {5};
					\node [below] at (5,0) {6};
					\node [below] at (6,0) {7};
					\node [below] at (7,0) {8};
				\end{scope}
			\end{tikzpicture}
		\end{center}
		
		Here is $\alpha_{4,3,1}$ applied to an element of $\minimalClass{3}{4}$.
		
		\begin{center}
			\begin{tikzpicture}[scale=.45]
				
				\begin{scope}[shift={(0,-.5)}]
					\draw (0,0) -- (7,0);
					\draw[very thick] (0,0) -- (1.5,1.5) -- (3,0);
					\draw (1,0) -- (3.5,2.5) -- (6,0);
					\draw (2,0) -- (3.5,1.5) -- (5,0);
					\draw[very thick] (4,0) -- (5.5,1.5) -- (7,0);
					\node [below] at (0,0) {1};
					\node [below] at (1,0) {2};
					\node [below] at (2,0) {3};
					\node [below] at (3,0) {4};
					\node [below] at (4,0) {5};
					\node [below] at (5,0) {6};
					\node [below] at (6,0) {7};
					\node [below] at (7,0) {8};
					\draw[->] (7.5,1) -- (9.5,1);
				\end{scope}
				
				\begin{scope}[shift={(10,-.5)}]
					\draw (0,0) -- (9,0);
					\draw[very thick] (0,0) -- (2,2) -- (4,0);
					\draw (1,0) -- (4.5,3.5) -- (8,0);
					\draw (2,0) -- (4.5,2.5) -- (7,0);
					\draw[gray, dashed] (3,0) -- (4.5,1.5) -- (6,0);
					\draw[very thick] (5,0) -- (7,2) -- (9,0);
					\node [below] at (0,0) {1};
					\node [below] at (1,0) {2};
					\node [below] at (2,0) {3};
					\node [below] at (3,0) {4};
					\node [below] at (4,0) {5};
					\node [below] at (5,0) {6};
					\node [below] at (6,0) {7};
					\node [below] at (7,0) {8};
					\node [below] at (8,0) {9};
					\node [below] at (9,0) {10};
					\draw[->] (9.5,1) -- (11.5,1);
				\end{scope}
				
				\begin{scope}[shift={(22,-.5)}]
					\draw (0,0) -- (9,0);
					\draw[very thick] (0,0) -- (2,2) -- (4,0);
					\draw (1,0) -- (4.5,3.5) -- (8,0);
					\draw[gray, dashed] (2,0) -- (4,2) -- (6,0);
					\draw (3,0) -- (5,2) -- (7,0);
					\draw[very thick] (5,0) -- (7,2) -- (9,0);
					\node [below] at (0,0) {1};
					\node [below] at (1,0) {2};
					\node [below] at (2,0) {3};
					\node [below] at (3,0) {4};
					\node [below] at (4,0) {5};
					\node [below] at (5,0) {6};
					\node [below] at (6,0) {7};
					\node [below] at (7,0) {8};
					\node [below] at (8,0) {9};
					\node [below] at (9,0) {10};
				\end{scope}
			\end{tikzpicture}
		\end{center}
		
		where the thick lines are chords in $C_{n,k}$, the thin chords are in $S_{C}$ and the greyed dashed chord is the new inserted one.
	
	\end{example}

	\begin{definition}
		We define $\beta_{n,k}$ for $n \geq k$, and $n > 3(n-k)$ to be a map from $\minimalClass{k}{n}$ to $D_{n-1}$ as follows.
		Given a diagram $C$, we denote $c$ to be the cord with end point right after $M_{n,k}$.
		We then swap the start index of the new chord with the closest start index of a chord in $S_{C}$ to its right.
		We continue to swap until there are no more start indices of chords in $S_{C}$ to its right.
		We then remove chord $c$.
	\end{definition}

	\begin{lemma}\label{lem:alpha}
		$\alpha_{n,k,i}\para{\minimalClass{k}{n}} \subseteq \minimalClass{k+1}{n+1}$.
	\end{lemma}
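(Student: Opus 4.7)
The plan is to verify two things about $\alpha_{n,k,i}(C)$: that it has $n+1$ chords on $\{1,\ldots,2(n+1)\}$ (clear from the construction), and that every resulting chord has length at least $k+1$. For the latter I would partition the chords of $\alpha_{n,k,i}(C)$ into five groups according to their origin in $C$: chords in $C_{n,k}$ with end in $M_{n,k}$ (Type A), chords in $C_{n,k}$ with start in $M_{n,k}$ (Type B), $S_C$ chords whose start is not swapped, $S_C$ chords whose start is swapped, and the newly inserted chord $c$.

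The three easy groups only gain length under the insertion: a Type A chord has its $M$-end shift right by $1$ while its start is unchanged; a Type B chord shifts start by $1$ and end by $2$; an unswapped $S_C$ chord has its end shift by $2$ with start unchanged. Starting from length $\geq k$, the new lengths are at least $k+1$, $k+1$, and $k+2$ respectively. For the remaining two groups I would first establish a structural lemma: every Type A chord starts in $\{1,\ldots,2(n-k)\}$, since its end lies in $M_{n,k}=\{k+1,\ldots,2n-k\}$ and length at least $k$ forces the start to be at most $2(n-k)$. Writing the $\abs{S_C}=2k-n$ starts of $S_C$ chords in increasing order as $p_1<\cdots<p_{\abs{S_C}}$, this forces the top $3k-2n$ of them to be the consecutive run $2(n-k)+1,\ldots,k$, while the bottom $n-k$ lie in $\{1,\ldots,2(n-k)\}$ interspersed with the Type A starts.

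With this in hand the argument is a case split. After the $\abs{S_C}-i$ prescribed swaps one has $s_c=p_{i+1}$, and $i\leq n-k$ gives $p_{i+1}\leq 2(n-k)+1$ by the structural lemma, so $c$ has length at least $(2n-k+2)-(2(n-k)+1)=k+1$. For a swapped $b_m$ with $m\geq n-k+1$ or $m=\abs{S_C}$, the new start is $p_m+1$ (using $p_{\abs{S_C}}=k$ for the terminal case), so the post-insertion length, already $\geq k+2$, drops by exactly $1$ to $\geq k+1$. The remaining possibility is $m\leq n-k$, and this is the main obstacle: once the gap $p_{m+1}-p_m$ can exceed $1$, the bound $e_b-s_b\geq k$ is no longer sharp enough. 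I would resolve it by switching to the bound $e_b\geq 2n-k+1$ coming from $e_b\in R_{n,k}$, which after the shift gives a post-insertion end $\geq 2n-k+3$; combined with $p_{m+1}\leq 2(n-k)+1$ from the structural lemma, this yields a new length $\geq k+2$. This covers every chord and completes the verification.
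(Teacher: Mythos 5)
Your proof is correct, and it uses the same case decomposition as the paper's (chords of $C_{n,k}$, the inserted chord, unswapped chords of $S_{C}$, swapped chords of $S_{C}$), with the easy cases handled identically by tracking how the two inserted indices shift each endpoint. Where you genuinely diverge is in the two non-trivial estimates. The paper bounds the inserted chord's length by counting covered indices ($2n-2k$ from $M_{n,k}$ plus at least one per swap, giving length at least $1+n-i \geq k+1$), and it handles a swapped $S_{C}$ chord whose start jumps by more than one via a local comparison: the position just left of its new start must be the start of a chord ending in $M_{n+1,k+1}$, and that chord's length of at least $k+1$ is inherited because our chord starts one position later but ends later as well. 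You instead prove a global structural lemma --- every chord of $C_{n,k}$ ending in $M_{n,k}$ starts in $\{1,\ldots,2(n-k)\}$, so the top $3k-2n$ starts of $S_{C}$ chords form the consecutive run $2(n-k)+1,\ldots,k$ --- and then read off both estimates by direct arithmetic: the inserted chord lands at $p_{i+1}\leq 2(n-k)+1$ against a fixed end $2n-k+2$, and a far-swapped chord has new start at most $2(n-k)+1$ against an end at least $2n-k+3$, giving length at least $k+2$ there. Your route costs an extra counting lemma but yields the exact post-swap start positions and a slightly stronger bound in the hard case, and it sidesteps the paper's somewhat delicate ``otherwise the swap would have stopped sooner'' step; the paper's route is more local and needs no global inventory of where the start points sit. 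The only blemish is notational: the terminal swapped chord lands at $k+1=p_{\abs{S_{C}}}+1$, so your convention in that case should read $p_{\abs{S_{C}}+1}=k+1$ rather than $p_{\abs{S_{C}}}=k$, but this does not affect the argument.
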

	
	\begin{proof}
		We see that the result will have $n+1$ chords, so it suffices to show that every chord has length at least $k+1$.
		
		Consider a chord $c$ in $C_{n,k}$, it either has $s_{c} \in M_{n,k}$ and $e_{c} \in R_{n,k}$, in which case it length is increased by 1, since we inserted a index between $_{n,k}M$ and $R_{n,k}$.
		Or $e_{c} \in M_{n,k}$ and $s_{c} \in L_{n,k}$, in which case it length is increased by 1, since we inserted a index between $M_{n,k}$ and $L_{n,k}$.
		Since the length of such a chord had to be at least $k$ to begin with, it must have at least length $k+1$ after applying $\alpha$.
		
		Consider the chord we just inserted.
		It will cover all the indices in $M_{n,k}$, and every time we swap, another index will be covered.
		Since there are a total of $n$ chords before inserting, of which $M_{n,k}$ contains $2n-2k$ of them, and it swaps until there are $i$ chord to its left in $S$, it swapped with at least $n-(2n-2k) - i$.
		Recall that the length of the chord will be the number of indices it covers plus 1.
		Thus its length is at least $1 +(2n-2k) + (n-(2n-2k)) - i = 1 + n - i \geq 1 + n - (n-k) = k+1$.
		As desired.
		
		Now consider chords in $S_{C}$.
		
		There are two cases, either it has its start index swapped at some point or it didn't.
		If it didn't, then it covers the new chord $c$, and has length greater then $c$'s length.
		Thus the chord has length at least $k+1$ as desired.
		
		If it did swap, then either its starting index increased by 1 or more.
		
		Suppose that its starting index increased by 1.
		Then the number of indices that lie in between its endpoints has increased by 1.
		When we inserted $c$, it was increased by 2, but then we moved the starting index forward by 1, causing it to lose 1.
		Thus its length increased by exactly 1.
		Since it must of have length $k$ to begin, with, in now has length at least $k+1$.
		
		Suppose that its starting index increased by more then 1.
		Let $a$ be its original starting index after inserting $c$ and $b$ be its starting index after inserting and swapping $c$.
		Then the index $b-1$ is the starting index of some point in $M_{n+1,k+1}$, since  $b-a > 1$ and otherwise $b$ would have occurred sooner.
		Thus the the chord with starting index $b-1$ has length at least $k+1$.
		Since the ending index of our chord lies in $R$ which is at least 1 more then the ending index of the chord at $b-1$, the length of our chord after swapping is at least $k+1$.
		
		Thus $\alpha_{n,k,i}\para{\minimalClass{k}{n}} \subseteq \minimalClass{k+1}{n+1}$ as desired.
	\end{proof}

	\begin{lemma}\label{lem:beta}
		$\beta_{n,k}\para{\minimalClass{k}{n}} \subseteq \minimalClass{k-1}{n-1}$.
	\end{lemma}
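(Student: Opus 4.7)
The plan is to mirror the case analysis of Lemma \ref{lem:alpha}: the output diagram has $n-1$ chords, and I would verify that each chord surviving $\beta_{n,k}$ has length at least $k-1$.

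First, under the strict hypothesis $n > 3(n-k)$ (equivalently $3k > 2n$), the chord $c$ removed by $\beta$ must lie in $S_C$. Indeed, $e_c = 2n-k+1$ together with the length condition force $s_c \leq 2n-2k+1 < k+1$, so $s_c \in L_{n,k}$ and the swap procedure is well defined. I would set up notation by listing the chords of $S_C \setminus \{c\}$ whose starts lie to the right of $s_c$ as $a_1, \ldots, a_r$ in increasing order of start, and note that after the swaps each $a_i$ inherits the start $s_{a_{i-1}}^{(0)}$ (with $a_0 := c$), while $c$'s final start is $s_{a_r}^{(0)} \leq k$.

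With that bookkeeping in place, I would walk through each remaining chord $d$ under the renumbering rule (positions strictly between $s_c^{\text{final}}$ and $e_c$ drop by one, positions above $e_c$ drop by two). Three of the four cases are routine: chords in $C_{n,k}$ starting in $M_{n,k}$ lose exactly one from their length; chords in $C_{n,k}$ ending in $M_{n,k}$ lose zero or one; and chords in $S_C \setminus \{c\}$ that were swapped lose at most one, because the gap $s_{a_i}^{(0)} - s_{a_{i-1}}^{(0)} \geq 1$ compensates exactly for the loss of two positions above $e_c$.

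The main obstacle is the final subcase: an unswapped chord $d \in S_C \setminus \{c\}$ with $s_d < s_c^{(0)}$. Here both endpoints of $d$ straddle those of $c$, so the renumbering subtracts two, and the bare bound that $d$ has length at least $k$ would only yield $k-2$. The key observation is that since $e_c$ is the smallest element of $R_{n,k}$ we have $e_d > e_c$, and combined with $s_d < s_c^{(0)}$ this strictly beats $c$'s length: $e_d - s_d > e_c - s_c^{(0)} \geq k$, so $d$'s original length is already at least $k+1$ and its new length is at least $k-1$. Combining all four cases gives $\beta_{n,k}(C) \in \minimalClass{k-1}{n-1}$.
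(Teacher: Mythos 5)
Your proposal is correct and follows essentially the same route as the paper's proof: the same four-way case split (chords of $C_{n,k}$ meeting $M_{n,k}$ at their start versus their end, and chords of $S_{C}$ that were or were not swapped), with the same key point that an unswapped chord of $S_{C}$ strictly covers $c$ and so starts with length at least $k+1$, surviving the loss of two positions. Your version just adds some extra bookkeeping (the explicit relabeling and the observation that $c \in S_{C}$) that the paper leaves implicit.
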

	
	\begin{proof}
		We see that the result will have $n-1$ chords, so it suffices to show that every chord has length at least $k-1$.
		
		Consider a chord $r$ in $C_{n,k}$, it either has $s_{r} \in M_{n,k}$ and $e_{r} \in R_{n,k}$, in which case it length is decreased by 1, since we removed the first index in $R_{n,k}$.
		Or $e_{r} \in M$ and $s_{r} \in L$, in which case it length is decreased by 1, since we removed the last index from $L_{n,k}$.
		Since the length of such a chord had to be at least $k$ to begin with, it must have at least length $k-1$ after applying $\beta_{n,k}$.
		
		Consider a chord $r$ in $S_{C}$
		We break it into two cases:
		
		Case 1: $s_{r}$ was swapped with $s_{c}$ at some point.
		Then $s_{r}$ has decreased by at least 1, which means that $\ell_{r}$ increased by at least 1.
		But when we remove $s_{c}$ a the end, $\ell_{r}$ is deceased by 2.
		Thus $\ell_{r}$ never deceases by more then 1.
		Since $\ell_{r} = k$, the length of $r$ must be at least length $k-1$ after applying $\beta_{n,k}$.
		
		Case 2: $s_{r}$ did not swap with $s_{c}$ at some point.
		Then $s_{r} < s_{c}$, which means that, $\ell_{r}$ is at least $2 + \ell_{c} = k+2$ since $\ell_{c}$ has length at least $k$.
		When we remove $s_{c}$ a the end, $\ell_{r}$ is deceased by 2.
		Thus $\ell_{r}$ never deceases by more then 2.
		Since $\ell_{r} \geq k+2$, the length of $r$ must be at least length $k$ after applying $\beta_{n,k}$.
		
		Thus $\beta_{n,k}\para{\minimalClass{k}{n}} \subseteq \minimalClass{k-1}{n-1}$ as desired.
	\end{proof}

	\begin{proof}[Proof (of theorem \ref{thm:main})]
	
		We shall proceed by constructing $(n-k+1)$ injective function from $\minimalClass{k}{n}$ to $\minimalClass{k+1}{n+}$ such that their images partition $\minimalClass{k+1}{n+1}$.
		Let $C \in \minimalClass{k}{n}$
		
		Let $E_{n,k,i}$ be the set of all linear chord diagrams in $\minimalClass{k}{n}$ such that the chord $c$ with $e_{s} = 2n-k+1$ (i.e. the first index after $M_{n,k}$) has $i$ start points of chords in $S_{C}$ to its left.
		
		Then by lemma \ref{lem:left} the collection $\{E_{n+1,k+1,0},\cdots,E_{n+1,k+1,n-k-1}\}$ partitions $\minimalClass{k+1}{n+1}$.
		By construction we see that $\Image(\alpha_{n,k,i}) \subseteq E_{n+1,k+1,i}$.
		We also see that both $\restricto{\beta_{n+1,k+1}}{E_{n+1,k+1,i}} \circ \alpha_{n,k,i}$ and $\alpha_{n,k,i} \circ \restricto{\beta_{n+1,k+1}}{E_{n+1,k+1,i}}$ are the identity map.
		Thus there is a bijection between $\minimalClass{k}{n}$ and $E_{n,k,i}$ for every $i$.
		
		Thus
		\begin{displaymath}
			\abs{\minimalClass{k+1}{n+1}} = \sumover{i=0}{n-k}{\abs{\alpha_{n,k,i}\para{\minimalClass{k}{n}}}} = (n-k+1)\abs{\minimalClass{k}{n}}
		\end{displaymath}
		As desired.
	
	\end{proof}
	
\bibliography{ChordDiagonalPaper}
\bibliographystyle{abbrv}

\end{document}